\documentclass[12pt,a4paper]{amsart}
\usepackage[width=0.75\paperwidth,height=0.85\paperheight,twoside=false,includehead,includefoot]{geometry}
\setlength{\topmargin}{0cm}

\allowdisplaybreaks[1]

\DeclareFontEncoding{OT2}{}{}
\DeclareFontSubstitution{OT2}{cmr}{m}{n}

\DeclareFontFamily{OT2}{cmr}{\hyphenchar\font45 }
\DeclareFontShape{OT2}{cmr}{m}{n}{%
   <5><6><7><8><9>gen*wncyr%
   <10><10.95><12><14.4><17.28><20.74><24.88>wncyr10}{}

\DeclareMathAlphabet{\mathcyr}{OT2}{cmr}{m}{n}
\DeclareMathAlphabet{\mathcyb}{OT2}{cmr}{b}{n}
\SetMathAlphabet{\mathcyr}{bold}{OT2}{cmr}{b}{n}

\usepackage{amsopn}

\DeclareMathOperator*{\Max}{Max}

\usepackage{amstext}
\usepackage{amsthm}
\usepackage{amssymb}

\newtheorem{thm}{Theorem}[section]
\newtheorem{lem}[thm]{Lemma}
\newtheorem{prop}[thm]{Proposition}
\newtheorem{cor}[thm]{Corollary}

\theoremstyle{definition}

\newtheorem{ex}[thm]{Example}
\theoremstyle{remark}
\newtheorem*{rem}{Remark}


\begin{document}

\title[On a basis for Euler-Zagier double zeta functions]{On a basis for Euler-Zagier double zeta functions with non-positive components}

\author{Hideki Murahara}
\address[Hideki Murahara]{Nakamura Gakuen University Graduate School, 5-7-1, Befu, Jonan-ku,
Fukuoka, 814-0198, Japan}
\email{hmurahara@nakamura-u.ac.jp}

\author{Takashi Nakamura}
\address[Takashi Nakamura]{Department of Liberal Arts, Faculty of Science and Technology, Tokyo University of Science, 2641 Yamazaki, Noda-shi, Chiba-ken, 278-8510, Japan}
\email{nakamuratakashi@rs.tus.ac.jp}

\subjclass[2010]{Primary 11M32}
\keywords{Basis, $\mathbb{Q}$-linear relations, Euler-Zagier double zeta function, Euler-Zagier multiple zeta functions, Multiple zeta values}

\begin{abstract}
 For a non-negative integer $N$, let 
 $\mathcal{Z}_{N}:=\sum^N_{c = 0} \mathbb{Q} \cdot \zeta(-c,s+c)$, 
 where the right-hand side is the vector space spanned by the Euler-Zagier double zeta functions over $\mathbb{Q}$. 
 In this paper, we show that 
 $\mathcal{Z}_{N}
  =\bigoplus^{N}_{c = 0 : \text{even}} \mathbb{Q} \cdot \zeta(-c,s+c)$, 
 where $\bigoplus$ is the direct sum of vector spaces. 
 Moreover, we give a family of relations that exhaust all $\mathbb{Q}$-linear relations on $\mathcal{Z}_{N}$.
\end{abstract}

\maketitle

\tableofcontents

\section{Introduction}
The Euler-Zagier multiple zeta function (MZF) is defined by the series
\begin{align*}
 \zeta(s_{1},\dots,s_{r}):=\sum_{1\le n_{1}<\cdots<n_{r}}\frac{1}{n_{1}^{s_{1}}\cdots n_{r}^{s_{r}}}
\end{align*}
for complex valuables $s_1,\ldots,s_r \in\mathbb{C}$, which is convergent absolutely in the domain 
\begin{align} \label{abc}
 \{(s_{1},\ldots,s_{r})\in\mathbb{C}^{r}\;|\;\Re(s_r)>1,\dots,\Re(s_1+\cdots+s_r)>r \}
\end{align}
(see \cite{Mat02}). 
It is known that the function $\zeta(s_{1},\dots,s_{r})$ can be meromorphically continued to the whole space $\mathbb{C}^{r}$
(see \cite{AET01} and \cite{Zha00}). 

The special values of MZF at positive integer points are called multiple zeta values (MZVs). 
The MZVs have emerged in various areas of mathematics and physics, inspiring interest among researchers 
(see (\cite{Zag94,LM95,LM96}), for example). 
In particular, the algebraic structure of MZV has been widely studied, and many kinds of algebraic relations over $\mathbb{Q}$ are known. 
The algebraic relations such as Extended double shuffle relation (\cite{IKZ06}), associator relation (\cite{Fur11}), confluence relation (\cite{HS19}), and Kawashima's relation (\cite{Kaw09}) are expected to exhaust all relations of MZVs, respectively. 
Note that some relations among MZVs are generalized to MZFs (see \cite{HMO18,HMO19,MO20}, for example).

For MZVs, the existence of a basis such as the Hoffman basis is also known. 
The Hoffman basis is the set of MZVs $\zeta(k_1,\dots,k_r)$ with $k_1,\dots,k_r\in\{2, 3\}$ and every MZV can be expressed as a $\mathbb{Q}$-linear combination of this set. 
This fact was conjectured by Hoffman \cite{Hof97} and proved by Brown \cite{Bro12}. 
Attempts have been made to understand the dimensions of weight graded spaces of MZVs (here, the weight means the sum of every component in MZVs, i.e., $k_1+\cdots+k_r$ for $\zeta(k_1,\dots,k_r)$). 
Zagier \cite{Zag94} gave the dimension conjecture, and Deligne and Goncharov \cite{DG05} and Terasoma \cite{Ter02} proved the upper bound of this conjecture. 

It should be noted that the Hoffman basis is not a basis in the ordinary sense, namely, it is not even known that $\zeta (2,3) / \zeta (3,2)$ is rational or not. 
For the same reason, it seems to be almost impossible to give the lower bound of the dimension conjectured by Zagier or show that the relations above give all $\mathbb{Q}$-linear relations of MZVs.

\section{Main Theorems}
While much research has been done on relations of MZFs/MZVs, and the basis and the dimensions of MZVs, studies on MZFs other than MZVs in the absolute convergence region has not been done so far. 
For a non-negative integer $N$, let 
\begin{align*}
 \mathcal{Z}_{N}
 &:=\sum_{0\le c\le N} \mathbb{Q}\cdot \zeta(-c,s+c), \\
 \mathcal{Z}
 &:=\lim_{N\rightarrow\infty} \mathcal{Z}_{N},
\end{align*}
where the right-hand side of the first equation is the vector space spanned by the Euler-Zagier double zeta functions over $\mathbb{Q}$. 
In this paper, we give a family of relations that exhaust all $\mathbb{Q}$-linear relations, an explicit basis, and the dimension of $\mathcal{Z}_{N}$ and $\mathcal{Z}$.
It should be emphasized that the basis given in Theorem \ref{th:main2} is a basis in the normal sense, in other words, every element of $\mathcal{Z}$ can be written in a unique way as a linear combination of elements of this basis. 
Furthermore, Theorem \ref{th:main2} enables us to show Theorem 2.1 which implies that the relations in \eqref{eq:eq1} give all $\mathbb{Q}$-linear relations of $\mathcal{Z}_N$. 
These theorems are the most noteworthy results in this paper since there is no such fact on ordinary MZVs as mentioned at the end of Section 1.

Now we fix a positive integer $N$ and put $N':=\lfloor N/2 \rfloor$ where $\lfloor x\rfloor$ is the floor function defined by $\lfloor x\rfloor :=\Max\{ n\in \mathbb{Z} \mid n\le x \}$.
Let $A$ be the $2N' \times N'$ matrix whose components $\{a_{c,d}\}$ are defined by the following rules:
 \begin{itemize}
  \item $a_{c,1}=1 \quad  (c\ge1)$, 
  \item $a_{1,d}=0,a_{2,d}=0 \quad  (d\ge2)$,  
  \item $a_{3,2}=-2$, $a_{3,d}=0 \quad  (d\ge3)$,
  \item $a_{c+2,d+1}=a_{c+1,d+1}-a_{c,d} \quad (c\ge2,d\ge1)$.  
 \end{itemize} 
Note that this matrix can be also defined by Lemma \ref{lem:acd}. 
Let $A_1$ and $A_2$ be 
$N' \times N'$ submatrices of $A$ whose $(i,j)$ components $a^{(1)}_{i,j}$ and $a^{(2)}_{i,j}$ are 
defined by
\begin{align*}
 a^{(1)}_{i,j}
 =a_{2i-1,j},
 \qquad 
 a^{(2)}_{i,j}
 =a_{2i,j}.
\end{align*}
Note that the matrices $A_1$ and $A_2$ are lower triangular matrices in which all the elements of the diagonal are non-zero, 
and have inverse matrices (see also Sections 3 and 6).
Put
\begin{align*}
 \boldsymbol{z}_1
 &:=(\zeta(0,s)/2,\zeta(-2,s+2),\zeta(-4,s+4),\dots, \zeta(-2N'+2,s+2N'-2) )^\mathsf{T}, \\
 \boldsymbol{z}_2
 &:=(\zeta(-1,s+1),\zeta(-3,s+3),\dots, \zeta(-2N'+1,s+2N'-1) )^\mathsf{T}.
\end{align*} 
Then our first theorem is as follows. 
\begin{thm} \label{th:main1}
 Let $s\in\mathbb{C}$. 
 Except for the singularities, we have
 \begin{align}
  \label{eq:eq1}
  A_1^{-1}\,\boldsymbol{z}_1 -A_2^{-1}\,\boldsymbol{z}_2&=\boldsymbol{0}. 
 \end{align} 
 Moreover, all $\mathbb{Q}$-linear relations of $\mathcal{Z}_{N}$ are deduced from this relation.
\end{thm}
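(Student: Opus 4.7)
My strategy is to use the classical Bernoulli expansion of $\zeta(-c,s+c)$ to embed $\mathcal{Z}_N$ into a space of shifted Riemann zeta functions, converting both halves of the theorem into linear algebra. Starting from the absolute-convergence expression $\zeta(-c,s+c) = \sum_{n\ge 2} n^{-(s+c)}\sum_{m=1}^{n-1}m^c$, substituting Faulhaber's formula for the inner sum, and then analytically continuing yields, for every $c\ge 1$,
\[
\zeta(-c,s+c) = \frac{\zeta(s-1)}{c+1} - \frac{\zeta(s)}{2} + \sum_{j\ge 1}\frac{B_{2j}}{2j}\binom{c}{2j-1}\zeta(s+2j-1),
\]
together with $\zeta(0,s)/2 = \tfrac{1}{2}\zeta(s-1) - \tfrac{1}{2}\zeta(s)$. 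The shifted zetas $\zeta(s+j)$ have simple poles at pairwise distinct points $s=1-j$ and are therefore $\mathbb{Q}$-linearly independent as meromorphic functions, so matching coefficients of $\zeta(s+j)$ provides a faithful test for $\mathbb{Q}$-linear relations among the generators of $\mathcal{Z}_N$.

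To verify the identity \eqref{eq:eq1}, I would introduce the auxiliary vector $\boldsymbol{x} := A_1^{-1}\boldsymbol{z}_1$ and show that the same $\boldsymbol{x}$ satisfies $A_2\boldsymbol{x} = \boldsymbol{z}_2$. Small-case computations suggest that the $k$-th component of $\boldsymbol{x}$ is essentially a rational multiple of $\zeta(s-1)-\zeta(s+2k-3)$ (with the natural reading at $k=1$); the matrix identities $A_1\boldsymbol{x} = \boldsymbol{z}_1$ and $A_2\boldsymbol{x} = \boldsymbol{z}_2$ then reduce, after expanding each $\zeta(-c,s+c)$ via the Bernoulli expansion above, to finite combinatorial identities among $\binom{c+1}{k}$, $B_{2j}$, and the entries of $A$. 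The defining recursion $a_{c+2,d+1} = a_{c+1,d+1} - a_{c,d}$ together with the vanishing $B_{2j+1}=0$ for $j\ge 1$ are precisely what make these identities telescope to give \eqref{eq:eq1}. I expect the cleanest execution to pass through Lemma \ref{lem:acd} (the closed-form description of $a_{c,d}$ advertised in the setup), after which the verification becomes a short induction on $d$.

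The second assertion then follows by a dimension count. By Theorem \ref{th:main2}, $\dim_{\mathbb{Q}}\mathcal{Z}_N = N'+1$, so the $\mathbb{Q}$-space of relations among the $N+1$ generators has dimension $N-N'$. Because $A_1$ is lower triangular with nonzero diagonal entries, the $N'$ scalar identities packaged in \eqref{eq:eq1} are $\mathbb{Q}$-linearly independent, and (together with the trivial elimination of the highest-index generator when its parity demands it) they span the full space of relations. The principal obstacle will be the combinatorial bookkeeping in the middle step—tracking the entries of $A_1^{-1}$ and $A_2^{-1}$ against the Bernoulli coefficients $B_{2j}\binom{c}{2j-1}$—and this is precisely where Lemma \ref{lem:acd} carries the weight.
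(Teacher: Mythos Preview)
Your approach is genuinely different from the paper's, and both halves of the comparison are instructive.

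For the identity \eqref{eq:eq1}, the paper does \emph{not} compute $A_1^{-1}\boldsymbol{z}_1$ at all. Instead it introduces the Tornheim vector
\[
\boldsymbol{t}=\bigl(T(0,0;s),\,T(-1,-1;s+2),\,\ldots,\,T(-N'+1,-N'+1;s+2N'-2)\bigr)^{\mathsf T},
\]
and uses Lemma~\ref{lem:acd} (the elementary identity $m^{c}+n^{c}=\sum_{d}a_{c,d}(mn)^{d-1}(m+n)^{c-2d+2}$) to show $\boldsymbol{z}_1=(A_1/2)\boldsymbol{t}$ and $\boldsymbol{z}_2=(A_2/2)\boldsymbol{t}$ simultaneously. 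The relation $A_1^{-1}\boldsymbol{z}_1=A_2^{-1}\boldsymbol{z}_2$ is then immediate, with no combinatorics involving $A_i^{-1}$ or Bernoulli numbers. Your route---guess a closed form for $\boldsymbol{x}=A_1^{-1}\boldsymbol{z}_1$ and then verify $A_2\boldsymbol{x}=\boldsymbol{z}_2$ via Faulhaber---can be made to work (your conjecture that the $k$-th entry of $\boldsymbol{x}$ is a rational multiple of $\zeta(s-1)-\zeta(s+2k-3)$ is correct, and is in fact equivalent to the paper's $\boldsymbol{x}=\boldsymbol{t}/2$), but it is considerably more laborious than the paper's one-line deduction. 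Note also that Lemma~\ref{lem:acd} is the symmetric-function identity above, not a closed formula for $a_{c,d}$; it feeds directly into the Tornheim argument rather than into your Bernoulli bookkeeping.

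For the completeness of relations, the paper does \emph{not} invoke Theorem~\ref{th:main2}. It instead proves directly, via the pole structure established in Theorem~\ref{th:pole}, that (i) no $\zeta(-2m,s+2m)$ lies in the span of the lower-index generators (Corollary~\ref{cor:2m}) and (ii) the coefficients expressing $\zeta(-2m-1,s+2m+1)$ in terms of the even generators are uniquely determined by matching residues (Corollary~\ref{lem:2m+1}); the relations in \eqref{eq:eq1} are then forced to be the only ones. Your dimension-count argument is cleaner in spirit, and your Faulhaber embedding actually \emph{proves} the linear independence of the even generators (the coefficient of $\zeta(s+2m-1)$ in $\zeta(-2m,s+2m)$ is $B_{2m}\ne 0$ and vanishes in all lower even generators), so you do not need to cite Theorem~\ref{th:main2} at all---doing so is circular in the paper's logical order, since Theorem~\ref{th:main2} is derived from \eqref{eq:eq1}. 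Replace that citation with the one-line independence argument from your own expansion and the second half stands on its own.
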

\begin{rem}
 The singularities of $\zeta (-k,s+k)\; (0 \le k \le N)$ are given in Theorem \ref{th:pole}. 
 We give some specific examples of $\mathbb{Q}$-linear relations in Section 6.
\end{rem}

As a consequence of Theorem \ref{th:main1}, we give bases of the spaces $\mathcal{Z}_{N}$ and $\mathcal{Z}$.
\begin{thm} \label{th:main2}
 For a non-negative integer $N$, we have
 \begin{align}
  \label{eq:base1}
  \mathcal{Z}_{N}
  &=\bigoplus_{\substack{ c\text{:even} \\ 0\le c\le N}}
   \mathbb{Q}\cdot \zeta(-c,s+c), \\
  \label{eq:base2}
  \mathcal{Z}
  &=\bigoplus_{\substack{ c\text{:even} \\ c\ge0 }}
   \mathbb{Q}\cdot \zeta(-c,s+c). 
 \end{align}
 Moreover, we have 
 \begin{align} \label{eq:dim}
  \dim_{\mathbb{Q}} \mathcal{Z}_{N} 
  =\left\lfloor \frac{N}{2} \right\rfloor +1.
 \end{align} 
\end{thm}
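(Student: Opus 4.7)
The plan is to deduce Theorem~\ref{th:main2} directly from Theorem~\ref{th:main1}. The first step is to rewrite the relation \eqref{eq:eq1} as $\boldsymbol{z}_2 = A_2 A_1^{-1}\boldsymbol{z}_1$; since $A_1$ is lower triangular with nonzero diagonal entries, $A_1^{-1}$ (and hence $A_2 A_1^{-1}$) has rational entries. This yields an explicit $\mathbb{Q}$-linear expression for each odd-indexed generator $\zeta(-(2i-1), s+(2i-1))$ in terms of the even-indexed ones $\zeta(-2j, s+2j)$, and together with the trivial inclusion of the remaining even-indexed generators, establishes the spanning part of \eqref{eq:base1}.

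To upgrade this sum to a direct sum, I would invoke the ``moreover'' clause of Theorem~\ref{th:main1}: every $\mathbb{Q}$-linear relation among the generators of $\mathcal{Z}_N$ is a $\mathbb{Q}$-linear combination of the $N'$ scalar rows of \eqref{eq:eq1}. Suppose a relation $\sum_c q_c \zeta(-c,s+c) = 0$ involves only even indices $c$. Expressing it as $\sum_i \lambda_i R_i$ where $R_i$ is the $i$-th row of \eqref{eq:eq1}, the odd-indexed part of this combination must vanish; but the matrix of coefficients of the odd-indexed $\zeta$'s in \eqref{eq:eq1} is $-A_2^{-1}$, which is invertible, so the vanishing of the odd part forces $\lambda_i = 0$ for all $i$, and consequently $q_c = 0$ for all $c$. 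This gives the linear independence needed for \eqref{eq:base1}.

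The dimension formula \eqref{eq:dim} is then immediate, since the even integers in $\{0,1,\dots,N\}$ are $0,2,\dots,2\lfloor N/2\rfloor$, totalling $\lfloor N/2\rfloor + 1$ elements. For the infinite-dimensional assertion \eqref{eq:base2}, I would pass to the direct limit: the bases produced by \eqref{eq:base1} for successive $N$ are nested under $\mathcal{Z}_N \subseteq \mathcal{Z}_{N+1}$, so their union provides a basis for $\mathcal{Z} = \bigcup_{N} \mathcal{Z}_N$ and yields \eqref{eq:base2}.

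The principal technical hinge will be the linear-independence step, which rests entirely on the ``all relations'' assertion of Theorem~\ref{th:main1} combined with the invertibility of $A_1$ and $A_2$; beyond that, the argument is largely bookkeeping. No new analytic input is required, so the main task is to package the content of Theorem~\ref{th:main1} correctly and to verify that the substitution $\boldsymbol{z}_2 = A_2 A_1^{-1}\boldsymbol{z}_1$ truly eliminates every odd-indexed generator from the span.
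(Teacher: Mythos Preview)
Your argument is correct, and the spanning step is identical to the paper's: both you and the paper rewrite \eqref{eq:eq1} as $\boldsymbol{z}_2 = A_2 A_1^{-1}\boldsymbol{z}_1$ (the paper records this as \eqref{eq:d} and \eqref{eq:mrela1}) to express each odd-indexed $\zeta(-(2m+1),s+2m+1)$ as a $\mathbb{Q}$-linear combination of the even-indexed ones, and then passes to the direct limit for $\mathcal{Z}$.

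The one substantive difference is how linear independence of the even-indexed functions is obtained. You deduce it purely algebraically from the ``moreover'' clause of Theorem~\ref{th:main1}: any relation among the generators lies in the row span of \eqref{eq:eq1}, and since the block of odd-index coefficients in those rows is $-A_2^{-1}$, no nontrivial $\mathbb{Q}$-combination of the rows can have all odd coefficients zero, so no nontrivial relation among only the even-indexed $\zeta$'s exists. The paper instead invokes Corollary~\ref{cor:2m} directly: $\zeta(-2m,s+2m)$ has a pole at $s=2-2m$ not shared by any $\zeta(-k,s+k)$ with $k<2m$, so it cannot lie in their span. Your route is tidier in that it uses Theorem~\ref{th:main1} as a black box and needs no further analytic input; the paper's route is more transparent about where independence actually comes from (the pole structure from Theorem~\ref{th:pole}), and incidentally rules out even $\mathbb{C}$-linear dependence, not just $\mathbb{Q}$-linear. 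Either argument suffices.
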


The rest of our paper is organized as follows. 
In Section 3, we prove \eqref{eq:eq1} in Theorem \ref{th:main1} which asserts 
$\dim_{\mathbb{Q}} \mathcal{Z}_{N} \le \lfloor N/2 \rfloor + 1$ 
by stating the relationship between the Euler-Zagier double zeta function and the Tornheim double zeta function.
In Section 4, we show some analytic properties of $\zeta (-n,s+n)$ which imply $\dim_{\mathbb{Q}} \mathcal{Z}_{N} \ge \lfloor N/2 \rfloor + 1$ and will be used in Section 5. 
Our main theorem is proved in Section 5 by the facts given in Sections 3 and 4. 
Section 6 gives concrete examples.

\section{$\mathbb{Q}$-linear relations of the spaces $\mathcal{Z}_{N}$ and $\mathcal{Z}$}
In this section, we prove Theorem \ref{th:main1}. 
We prove the following equality, first. 
\begin{lem} \label{lem:main1}
 Let $2, 1 \ne s \in {\mathbb{C}}$. 
 Then we have \begin{align*}
  \zeta (0,s) 
  =2 \zeta (-1,s+1).
 \end{align*}
\end{lem}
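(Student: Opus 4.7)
The plan is to establish the identity first in the domain of absolute convergence and then propagate it to all non-singular $s$ by analytic continuation.

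First I would locate the common region of absolute convergence. By the convergence condition \eqref{abc}, $\zeta(0,s)$ requires $\Re(s)>1$ and $\Re(s)>2$, while $\zeta(-1,s+1)$ requires $\Re(s+1)>1$ and $\Re(s)>2$; so both series converge absolutely in the half-plane $\Re(s)>2$. Throughout this half-plane, Fubini justifies rearranging each double sum as a single sum indexed by the outer variable $n$, using the inner sums $\sum_{m=1}^{n-1}1=n-1$ and $\sum_{m=1}^{n-1}m=\tfrac{n(n-1)}{2}$.

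Performing the two rearrangements gives
\begin{align*}
 \zeta(0,s)&=\sum_{n\ge 2}\frac{n-1}{n^{s}}=\zeta(s-1)-\zeta(s),\\
 2\zeta(-1,s+1)&=\sum_{n\ge 2}\frac{n(n-1)}{n^{s+1}}=\zeta(s-1)-\zeta(s),
\end{align*}
so the two quantities coincide as meromorphic functions on $\Re(s)>2$.

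Finally, since $\zeta(0,s)$ and $\zeta(-1,s+1)$ both extend meromorphically to the whole $s$-plane (by the result cited in the introduction), and the right-hand side $\zeta(s-1)-\zeta(s)$ is meromorphic with poles only at $s=1$ and $s=2$, the identity of Lemma \ref{lem:main1} follows on $\mathbb{C}\setminus\{1,2\}$ by the identity theorem for meromorphic functions.

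There is really no serious obstacle here: the computation is essentially bookkeeping once the region of absolute convergence is pinned down. The only mild point of care is to verify that the exclusion $s\neq 1,2$ is exactly what is needed to avoid the poles of $\zeta(s)$ and $\zeta(s-1)$, so that analytic continuation gives a genuine equality rather than a statement about residues.
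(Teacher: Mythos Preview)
Your proof is correct. Both you and the paper establish the identity on $\Re(s)>2$ and then invoke analytic continuation, but the computations in that half-plane differ. The paper writes $2\zeta(-1,s+1)=2\sum_{m,n\ge 1}m/(m+n)^{s+1}$ and uses the symmetry $m\leftrightarrow n$ to combine the two copies into $\sum_{m,n\ge 1}(m+n)/(m+n)^{s+1}=\zeta(0,s)$, never evaluating either side in closed form. You instead sum the inner variable to get both sides equal to $\zeta(s-1)-\zeta(s)$. Your route has the pleasant side effect of making the pole locations at $s=1,2$ immediately visible, so the appeal to Theorem~\ref{th:pole} for the analytic continuation step becomes unnecessary; the paper's symmetrization, on the other hand, is the $c=1$ case of the manipulation that drives Proposition~\ref{prop:Tornheim} and hence fits more naturally into the surrounding framework.
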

\begin{proof}
 By the definition of the Euler-Zagier double zeta function, we have
 \begin{align*}
  2 \zeta (-1,s+1) 
  &=2 \sum_{m,n=1}^\infty 
   \frac{1}{ m^{-1}(m+n)^{s+1} } \\
  &=\sum_{m,n=1}^\infty \frac{m}{ (m+n)^{s+1} }
   +\sum_{m,n=1}^\infty \frac{n}{ (m+n)^{s+1} } \\
  &=\sum_{m,n=1}^\infty \frac{m+n}{ (m+n)^{s+1} } \\
  &=\zeta (0,s) 
 \end{align*}
 when $\Re(s)>2$. 
 By the analytic continuation given in Theorem \ref{th:pole}, we have the lemma.    
\end{proof}

To prove Theorem \ref{th:main1}, we need Lemma \ref{lem:acd} and Propositions \ref{prop:Tornheim} and \ref{prop:lowerMat}. 
Recall that we fixed an integer $N$ and put $N':=\lfloor N/2 \rfloor$ in Section 2.
For non-negative integers $a$ and $b$, let $f(a,b):=m^a n^a (m+n)^b$. 
\begin{lem} \label{lem:acd}
 For a positive integer $c$, we have
 \begin{align*}
  m^c+n^c=\sum_{d=1}^{N'} a_{c,d} f(d-1,c-2d+2). 
 \end{align*}
\end{lem}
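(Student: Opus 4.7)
The plan is to prove Lemma \ref{lem:acd} by induction on $c$, driven by the Newton-type identity
\[
  m^{c+2}+n^{c+2}=(m+n)\bigl(m^{c+1}+n^{c+1}\bigr)-mn\bigl(m^{c}+n^{c}\bigr),
\]
whose shape exactly mirrors the defining recurrence $a_{c+2,d+1}=a_{c+1,d+1}-a_{c,d}$ of the matrix $A$.

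First I would dispose of the base cases $c=1,2,3$ by directly substituting the prescribed initial values of $a_{c,d}$ into the right-hand side and expanding. These small cases must be handled by hand because the recurrence only begins producing new rows of $A$ from the fourth row onwards, so rows $1$, $2$, $3$ are not yet under the control of the induction.

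For the inductive step, suppose the lemma is already known for $c$ and $c+1$, with $c\ge 2$. Multiplying the formula for $c+1$ by $(m+n)$ raises the $(m+n)$-exponent inside every $f(d-1,\cdot)$ by one, yielding
\[
  (m+n)\bigl(m^{c+1}+n^{c+1}\bigr)=\sum_{d} a_{c+1,d}\, f\bigl(d-1,(c+2)-2d+2\bigr).
\]
Multiplying the formula for $c$ by $mn$ raises the $(mn)$-exponent by one, and relabeling the summation index $d\mapsto d-1$ turns it into
\[
  mn\bigl(m^{c}+n^{c}\bigr)=\sum_{d} a_{c,d-1}\, f\bigl(d-1,(c+2)-2d+2\bigr).
\]
Subtracting and invoking the Newton identity then gives
\[
  m^{c+2}+n^{c+2}=\sum_{d}\bigl(a_{c+1,d}-a_{c,d-1}\bigr)\,f\bigl(d-1,(c+2)-2d+2\bigr),
\]
and the recurrence, rewritten as $a_{c+2,d}=a_{c+1,d}-a_{c,d-1}$ with the convention $a_{c,0}:=0$, matches the coefficient on the right with $a_{c+2,d}$ term by term; the $d=1$ endpoint correctly reproduces $a_{c+2,1}=a_{c+1,1}=1$, closing the induction.

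The only genuine obstacle is bookkeeping on supports: one must confirm that $a_{c,d}=0$ whenever $c-2d+2<0$, so that truncating the summation at $d=N'=\lfloor N/2\rfloor$ drops no nonzero term and so that no $f(\cdot,b)$ with a negative second argument is ever required. This vanishing follows from a short parallel induction using the recurrence together with the zeros prescribed in the first three rows.
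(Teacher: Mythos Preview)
Your proposal is correct and follows essentially the same route as the paper: induction on $c$ via the Newton identity $m^{c+1}+n^{c+1}=(m+n)(m^c+n^c)-mn(m^{c-1}+n^{c-1})$, matched against the defining recurrence of $a_{c,d}$. You are in fact slightly more careful than the paper on two points: you include $c=3$ among the base cases (necessary, since the recurrence $a_{c+2,d+1}=a_{c+1,d+1}-a_{c,d}$ is only stated for $c\ge 2$ and thus produces rows of $A$ only from the fourth onward), and you explicitly address the vanishing $a_{c,d}=0$ for $c-2d+2<0$ that justifies truncating the sum at $d=N'$.
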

\begin{proof}
 We prove this lemma by induction on $c$. 
 The cases $c=1,2$ are ovbiously hold. 
 
 From the equality
 \[
  m^{c+1}+n^{c+1}
  =(m+n)(m^{c}+n^{c}) -mn (m^{c-1}+n^{c-1}), 
 \]
 and the induction hypothesis, we have
 \begin{align*}
  &m^{c+1}+n^{c+1} \\
  &=(m+n) \sum_{d=1}^{N'} a_{c,d} f(d-1,c-2d+2)
   -mn \sum_{d=1}^{N'} a_{c-1,d} f(d-1,c-2d+1) \\
  &=\sum_{d=1}^{N'} a_{c,d} f(d-1,c-2d+3)
   -\sum_{d=1}^{N'} a_{c-1,d} f(d,c-2d+1). 
 \end{align*}
 By the definition of the double sequence $a_{c,d}$ given in Section 2, we find
 \begin{align*}
  &m^{c+1}+n^{c+1} \\   
  &=a_{c+1,1} f(0,c+1) +\sum_{d\ge2} (a_{c,d} -a_{c-1,d-1}) f(d-1,c-2d+3) \\ 
  &=\sum_{d=1}^{N'} a_{c+1,d} f(d-1,c-2d+3). 
 \end{align*}
 This finishes the proof. 
\end{proof}

Let $T(s_1,s_2;s)$ be the Tornheim double zeta function (see \cite{Nak06}, for example), i.e.,
\[
 T(s_1,s_2;s)
 := \sum_{m,n=1}^\infty \frac{1}{m^{s_1} n^{s_2} (m+n)^s}.
\]
\begin{prop} \label{prop:Tornheim} 
 For a positive integer $c$ and $s\in\mathbb{C}$ with $\sigma>2$, we have
 \begin{align*}
  \zeta(-c,s+c)
  =\sum_{d=1}^{N'} \frac{a_{c,d}}{2} \, T(-d+1,-d+1;s+2d-2).  
 \end{align*}
\end{prop}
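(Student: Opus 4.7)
The plan is to start in the region of absolute convergence and symmetrize the series for $\zeta(-c,s+c)$, then apply Lemma \ref{lem:acd} term-by-term to split it into a $\mathbb{Q}$-linear combination of Tornheim double zeta functions.

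First I would rewrite the left-hand side. Using the defining series and the change of variables $n_{1}=m$, $n_{2}=m+n$ with $m,n\ge 1$, we get
\begin{align*}
 \zeta(-c,s+c)
 =\sum_{1\le n_{1}<n_{2}} \frac{n_{1}^{c}}{n_{2}^{s+c}}
 =\sum_{m,n\ge 1} \frac{m^{c}}{(m+n)^{s+c}},
\end{align*}
which is absolutely convergent for $\Re(s)>2$, since both convergence conditions in \eqref{abc} reduce to $\Re(s)>2$ when one component is $-c\le 0$. The same sum with $m^{c}$ replaced by $n^{c}$ agrees with this one by the symmetry $(m,n)\leftrightarrow(n,m)$, so averaging yields
\begin{align*}
 2\zeta(-c,s+c)
 =\sum_{m,n\ge 1}\frac{m^{c}+n^{c}}{(m+n)^{s+c}}.
\end{align*}

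Next I would plug in Lemma \ref{lem:acd}, which expresses $m^{c}+n^{c}$ as a finite $\mathbb{Q}$-linear combination of $f(d-1,c-2d+2)=m^{d-1}n^{d-1}(m+n)^{c-2d+2}$. Since the sum over $d$ is finite, interchanging it with the (absolutely convergent) sum over $m,n$ is immediate, giving
\begin{align*}
 2\zeta(-c,s+c)
 =\sum_{d=1}^{N'} a_{c,d}\sum_{m,n\ge 1}\frac{m^{d-1}n^{d-1}(m+n)^{c-2d+2}}{(m+n)^{s+c}}
 =\sum_{d=1}^{N'} a_{c,d}\sum_{m,n\ge 1}\frac{m^{d-1}n^{d-1}}{(m+n)^{s+2d-2}}.
\end{align*}
Recognizing the inner sum as $T(-d+1,-d+1;s+2d-2)$ and dividing by $2$ gives the claimed identity.

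The only substantive point to watch is that each inner Tornheim sum is individually absolutely convergent in the range $\Re(s)>2$; one must verify this before splitting. For the summand $m^{d-1}n^{d-1}/(m+n)^{s+2d-2}$ with $1\le d\le N'$ and $c\ge 2d-1$, standard estimates show convergence whenever $\Re(s)>2$, so no additional restriction on $s$ is needed beyond the one already assumed. This is the only place where care is required; everything else is formal manipulation combining Lemma \ref{lem:acd} with the symmetrization trick used in the proof of Lemma \ref{lem:main1}.
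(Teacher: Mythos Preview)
Your proof is correct and follows essentially the same approach as the paper: symmetrize to get $2\zeta(-c,s+c)=\sum_{m,n\ge 1}(m^{c}+n^{c})/(m+n)^{s+c}$, apply Lemma~\ref{lem:acd}, and identify the resulting inner sums as Tornheim double zeta functions. Your version is somewhat more careful in justifying the change of variables and the termwise splitting via absolute convergence, but the underlying argument is identical.
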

\begin{proof}
 By Lemma \ref{lem:acd}, have
 \begin{align*}
  2\zeta(-c,s+c)
  &=\sum_{m,n\ge1} \frac{m^c+n^c}{(m+n)^{s+c}} \\
  &=\sum_{d=1}^{N'} \sum_{m,n\ge1} \frac{a_{c,d} f(d-1,c-2d+2)}{(m+n)^{s+c}} \\
  &=\sum_{d=1}^{N'} \sum_{m,n\ge1} \frac{a_{c,d} m^{d-1}n^{d-1} }{(m+n)^{s+2d-2}} \\
  &=\sum_{d=1}^{N'} a_{c,d} T(-d+1,-d+1,s+2d-2). \qedhere
 \end{align*}
\end{proof}

The following proposition is prove by the properties of cofactor matrix, inverse matrix, and triangular matrix. 
\begin{prop}[{\cite[Theorem 2]{DB14}}] \label{prop:lowerMat}
 Let $A:=(a_{i,j})$ be a $m\times m$ lower triangular matrix whose diagonal components are not $0$. 
 Then the matrix $A$ has the inverse matrix. 
 Furthermore, we have 
 \begin{align*}
  A^{-1}
  =\left(
 \begin{array}{cccccc}
  \frac{1}{a_{1,1}} &0&\cdots&\cdots&0  \\
  a'_{2,1} & \frac{1}{a_{2,2}} &0&\cdots&0 \\
  \vdots &&\ddots &\ddots&\vdots \\  
  \vdots &&&\ddots&0 \\  
  a'_{m,1} & \cdots &\cdots& a'_{m,m-1} &\frac{1}{a_{m,m}}  \\
 \end{array}
 \right), 
 \end{align*}
 where
 \begin{align*}
  a'_{i,j}=(-1)^{i-j} \frac{D_{i,j}}{a_{j,j} \cdots a_{i,i}}
  \qquad (i>j). 
 \end{align*}
 Here we set
 \begin{align*}
  D_{i,j}:=
  \begin{cases}
   \quad a_{i,j} &\textrm{if }\,\, i=j+1, \\
   \left|
   \begin{array}{cccccccc}
    a_{j+1,j} & a_{j+1,j+1} &0&0&\cdots&0 \\
    a_{j+2,j} & a_{j+2,j+1} & a_{j+2,j+1} &0&\cdots&0 \\
    \vdots &\vdots &\vdots& \ddots &&\vdots \\  
    a_{i-2,j} & a_{i-2,j+1} & \cdots & \cdots & a_{i-2,i-1} &0 \\
    a_{i-1,j} & a_{i-1,j+1} & \cdots & \cdots & \cdots & a_{i-1,i-1} \\
    a_{i,j} & a_{i,j+1} & \cdots & \cdots & \cdots & a_{i,i-1} \\
 \end{array}
 \right|
  &\textrm{if }\,\, i>j+1.
  \end{cases}
 \end{align*}
\end{prop}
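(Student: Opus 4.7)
The plan is to apply the adjugate/cofactor formula $A^{-1}=(\det A)^{-1}\,\mathrm{adj}(A)$ to the lower triangular matrix $A$ and exploit the resulting block structure. Since $A$ is triangular with non-zero diagonal, $\det A=\prod_{k=1}^{m}a_{k,k}\neq 0$, so $A$ is invertible, and each entry is $(A^{-1})_{i,j}=(-1)^{i+j}M_{j,i}/\det A$, where $M_{j,i}$ denotes the determinant of the submatrix obtained by deleting row $j$ and column $i$ from $A$. The identity $(-1)^{i+j}=(-1)^{i-j}$ will be used implicitly throughout.

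First I would dispatch the trivial cases. For $i<j$, solving $A\mathbf{x}=\mathbf{e}_{j}$ in order shows that $x_{1}=\cdots=x_{j-1}=0$, because the first $j-1$ equations reduce successively to $a_{k,k}x_{k}=0$ with non-zero diagonal coefficient; hence $(A^{-1})_{i,j}=0$ and $A^{-1}$ is lower triangular. For $i=j$, deleting row $i$ and column $i$ leaves a lower triangular matrix with diagonal $\{a_{k,k}\}_{k\neq i}$, so $M_{i,i}=\prod_{k\neq i}a_{k,k}$ and dividing by $\det A$ gives $(A^{-1})_{i,i}=1/a_{i,i}$, matching the displayed diagonal.

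The substantive case is $i>j$. After striking row $j$ and column $i$, partition the surviving rows as $\{1,\dots,j-1\}\cup\{j+1,\dots,i\}\cup\{i+1,\dots,m\}$ and the surviving columns as $\{1,\dots,j-1\}\cup\{j,\dots,i-1\}\cup\{i+1,\dots,m\}$. Relative to this $3\times 3$ block partition, every block strictly above the block diagonal consists of entries $a_{r,c}$ of $A$ with $r<c$, and hence vanishes by lower triangularity. Consequently the minor is block lower triangular, and its determinant factors as the product of three diagonal-block determinants: the top-left block is the principal $(j-1)\times(j-1)$ submatrix of $A$, still lower triangular with determinant $\prod_{k=1}^{j-1}a_{k,k}$; the bottom-right block is lower triangular with determinant $\prod_{k=i+1}^{m}a_{k,k}$; and the middle block, formed by original rows $j+1,\dots,i$ and original columns $j,\dots,i-1$, is the $(i-j)\times(i-j)$ upper-Hessenberg array whose determinant is exactly $D_{i,j}$ as defined in the statement (the zeros in its upper-right come from the same lower triangularity of $A$). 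Multiplying these and dividing by $\det A=\prod_{k=1}^{m}a_{k,k}$ yields $(A^{-1})_{i,j}=(-1)^{i-j}D_{i,j}/(a_{j,j}\cdots a_{i,i})$, which is the claimed formula.

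The main obstacle is purely bookkeeping: one must carefully track the re-indexing after deletion and verify that the three off-diagonal blocks above the block diagonal are forced to vanish, so that the block-triangular determinant formula applies. Once this is checked, the determinants of the two corner blocks are immediate from triangularity, and the identification of the central block with the matrix defining $D_{i,j}$ reduces to a direct comparison of indices.
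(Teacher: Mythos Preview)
Your argument is correct and follows exactly the route the paper indicates: the paper does not give a detailed proof but cites \cite{DB14} and remarks that the result ``is prove[d] by the properties of cofactor matrix, inverse matrix, and triangular matrix,'' which is precisely your adjugate/block-triangular computation. One cosmetic slip: the central $(i-j)\times(i-j)$ block is \emph{lower} Hessenberg (zeros above the superdiagonal), not upper Hessenberg, but this does not affect the argument.
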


Now we prove the equality \eqref{eq:eq1} in Theorem \ref{th:main1}.
\begin{proof}[Proof of the equality \eqref{eq:eq1}]
In Lemma \ref{lem:main1}, we have already proved the first component of both vertical vectors in \eqref{eq:eq1}. 
Thus we prove the remainder here. 
Putting 
\begin{align*}
 \boldsymbol{t}
 &:=\biggl( 
  T(0,0;s),T(-1,-1;s+2),\dots,T\left(-N'+1,-N'+1;s+2N'-2 \right) 
  \biggr)^\mathsf{T},  
\end{align*} 
we have $\boldsymbol{z}=(A_1/2)\boldsymbol{t}$ and $\boldsymbol{z}=(A_2/2)\boldsymbol{t}$ by Proposition \ref{prop:Tornheim}.
Since the matrices $A_1$ and $A_2$ are lower triangular matrices in which all the elements of the diagonal are non-zero, we obtain the result by Proposition \ref{prop:lowerMat} (see Example 6.2, the matrices $A_1$ and $A_2$ with $N=12$ are given). 
\end{proof}

At the end of this section, we show Corollary \ref{cor:sumis0}. 
To prove this, we use the following fact. 
\begin{lem} \label{lem:simplefact}
 We have
 \begin{align*}
  \frac{
  \left|
   \begin{array}{cccccccc}
    1 & a_{2,2} &0&0&\cdots&0 \\
    1 & a_{3,2} & a_{3,3} &0&\cdots&0 \\
    \vdots &\vdots &\vdots& \ddots & \ddots&\vdots \\  
    \vdots &\vdots &\vdots&& \ddots & 0 \\  
    1 & a_{i-1,2} & \cdots & \cdots & \cdots & a_{i-1,i-1}  \\
    1 & a_{i,2} & \cdots & \cdots & \cdots & a_{i,i-1}  \\
   \end{array}
  \right|}
  {a_{2,2}\cdots a_{i,i}}
  =
  \sum_{j=2}^{i} (-1)^{j}
  \frac{
  \left|
   \begin{array}{cccccccc}
    a_{j+1,j} & a_{j+1,j+1} &0&\cdots&0 \\
    \vdots &\vdots& \ddots & \ddots&\vdots \\  
    \vdots &\vdots&& \ddots & 0 \\  
    a_{i-1,j} & \cdots & \cdots & \cdots & a_{i-1,i-1}  \\
    a_{i,j} & \cdots & \cdots & \cdots & a_{i,i-1}  \\
   \end{array}
  \right|}
  {a_{j,j}\cdots a_{i,i}}.
 \end{align*}
 Here we understand $|\,\emptyset\,|=1$.
\end{lem}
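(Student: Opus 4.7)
The plan is to establish the identity by cofactor expansion of the determinant on the left-hand side along its first column, which consists entirely of $1$'s. Writing the $(i-1)\times(i-1)$ matrix on the left as $M$ with entries $M_{r,c}=a_{r+1,c}$ for $r,c\in\{1,\dots,i-1\}$, the defining zero pattern $a_{k,c}=0$ whenever $c>k$ translates into $M_{r,c}=0$ whenever $c>r+1$, so column $c\ge 2$ has its top $c-2$ entries equal to zero. Expansion along column $1$ then gives
\[
 \det M=\sum_{r=1}^{i-1}(-1)^{r+1}\det M^{(r,1)},
\]
where $M^{(r,1)}$ denotes the minor obtained by deleting row $r$ and column $1$.

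The key observation is that each minor $M^{(r,1)}$ has a block-triangular shape. After deletion the surviving rows correspond to $a$-row indices $\{2,\dots,i\}\setminus\{r+1\}$ and the surviving columns to $a$-column indices $2,\dots,i-1$. The top $r-1$ rows (i.e. $a$-rows $2,\dots,r$) are supported only on columns with $a$-column index $\le r$, so $M^{(r,1)}$ decomposes into an $(r-1)\times(r-1)$ lower-triangular block $L_r$ with diagonal $a_{2,2},\dots,a_{r,r}$, a zero block in the upper right, and an $(i-r-1)\times(i-r-1)$ block $C_r$ in the lower right with entries $a_{k,c}$ for $k\in\{r+2,\dots,i\}$ and $c\in\{r+1,\dots,i-1\}$. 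This block $C_r$ is exactly the matrix whose determinant defines $D_{i,r+1}$ in Proposition~\ref{prop:lowerMat}, with the convention $D_{i,i}=|\emptyset|=1$ used when $r=i-1$. Hence
\[
 \det M^{(r,1)}=(a_{2,2}\cdots a_{r,r})\,D_{i,r+1}.
\]

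Substituting $j=r+1$ into the cofactor expansion then yields
\[
 \det M=\sum_{j=2}^{i}(-1)^{j}(a_{2,2}\cdots a_{j-1,j-1})\,D_{i,j},
\]
and dividing through by $a_{2,2}\cdots a_{i,i}$ gives the claimed identity. The main thing to be careful about is the index shift between the $M$-labelling and the $a$-labelling, and a consistent handling of the boundary cases $r=1$ and $r=i-1$, where the empty-product convention for $L_r$ and the empty-determinant convention for $C_r$ must both be invoked; once these conventions are in force, the proof reduces to the block-triangularity observation above and is essentially a one-line determinant calculation.
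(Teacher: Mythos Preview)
Your proof is correct and follows the same approach as the paper, which simply states ``We can obtain this lemma by cofactor expansions.'' You have supplied the details: expanding along the first column of ones and observing that each resulting minor is block lower-triangular, with the top-left block being lower triangular with diagonal $a_{2,2},\dots,a_{r,r}$ and the bottom-right block equal to $D_{i,r+1}$.
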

\begin{proof}
 We can obtain this lemma by cofactor expansions.  
\end{proof}
\begin{cor} \label{cor:sumis0}
 The sum of the components in each row of matrices $A_1^{-1}$ and $A_2^{-1}$ is 0 except for the first row.
\end{cor}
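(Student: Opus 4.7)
The plan is to reformulate the row sum as a single component of a matrix--vector product. Let $\mathbf{1}=(1,1,\dots,1)^{\mathsf{T}}$ denote the all-ones column vector of the appropriate length. Then the sum of the entries in the $i$-th row of $A_k^{-1}$ is exactly the $i$-th coordinate of the vector $A_k^{-1}\mathbf{1}$, so the corollary is equivalent to the claim $A_k^{-1}\mathbf{1}=e_1$ for $k=1,2$, where $e_1$ is the first standard basis vector.

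This claim then reduces to a one-line observation about the structure of $A_k$. By the defining rule $a_{c,1}=1$ for every $c\ge 1$, the first column of $A_1$ has entries $a^{(1)}_{i,1}=a_{2i-1,1}=1$, and the first column of $A_2$ has entries $a^{(2)}_{i,1}=a_{2i,1}=1$. In both cases the first column equals $\mathbf{1}$, i.e.\ $A_k e_1=\mathbf{1}$; left-multiplying by $A_k^{-1}$ yields $A_k^{-1}\mathbf{1}=e_1$, whose $i$-th coordinate is $0$ for $i\ge 2$.

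Since the paper inserts Lemma~\ref{lem:simplefact} immediately before the corollary, the intended proof likely proceeds instead through the explicit inversion formula in Proposition~\ref{prop:lowerMat}. In that route one writes the row sum as
\[
\frac{1}{a^{(k)}_{i,i}}+\sum_{j=1}^{i-1}(-1)^{i-j}\frac{D^{(k)}_{i,j}}{a^{(k)}_{j,j}\cdots a^{(k)}_{i,i}},
\]
notes that $D^{(k)}_{i,1}$ has an all-ones first column because $a^{(k)}_{r,1}=1$, and applies Lemma~\ref{lem:simplefact} to expand this single determinant as a signed combination of the remaining $D^{(k)}_{i,j}$ for $2\le j\le i$ (with the convention $D^{(k)}_{i,i}=|\emptyset|=1$). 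After reconciling the signs via $(-1)^{i-1+j}=-(-1)^{i-j}$, the expanded $j=1$ contribution cancels both the diagonal term $1/a^{(k)}_{i,i}$ and every remaining off-diagonal contribution, leaving $0$.

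There is no substantive obstacle in either route; the crux in both cases is the observation that the first column of $A_k$ is the all-ones vector. The only minor bookkeeping point is verifying $a^{(k)}_{1,1}=1$ so that the $j=1$ denominator in Proposition~\ref{prop:lowerMat} contributes nothing, which holds because $a_{1,1}=a_{2,1}=1$.
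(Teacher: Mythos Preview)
Your proposal is correct. Your primary argument---observing that the first column of $A_k$ is the all-ones vector (since $a_{c,1}=1$ for every $c\ge1$), hence $A_k e_1=\mathbf{1}$ and thus $A_k^{-1}\mathbf{1}=e_1$---is a complete and valid proof.

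The paper does indeed take the longer route you anticipated in your second paragraph: it invokes Proposition~\ref{prop:lowerMat} to write the entries of $A_1^{-1}$ explicitly as signed determinant ratios, expresses the $k$-th coordinate of $A_1^{-1}\mathbf{1}$ as a sum of those ratios, and then applies Lemma~\ref{lem:simplefact} (cofactor expansion of the $j=1$ determinant along its all-ones first column) to force the cancellation. Your one-line linear algebra argument bypasses both Proposition~\ref{prop:lowerMat} and Lemma~\ref{lem:simplefact} entirely; it exploits exactly the same structural fact (the first column of $A_k$ is $\mathbf{1}$) but at the level of matrix--vector identities rather than determinant manipulations. What the paper's approach buys is an explicit formula for each entry of $A_k^{-1}$ along the way, though that formula is not used elsewhere in the paper; what your approach buys is brevity and robustness, since it requires only invertibility of $A_k$ and nothing about triangularity or cofactors.
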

\begin{proof}
By definitions, we have
 \begin{align*}
  A_1
  =\left(
 \begin{array}{cccccc}
  1 & 0& 0&\cdots&\cdots&0  \\
  1 & a_{2,2} &0&0&\cdots&0 \\
  1 & a_{4,2} & a_{4,3} &0&\cdots&0 \\
  \vdots &&\ddots &\ddots&\ddots&\vdots \\  
  \vdots &&&\ddots&\ddots&0 \\  
  1 & a_{N',2}& \cdots &\cdots& a_{N',N'-1} & a_{N',N'} \\
 \end{array}
 \right).
 \end{align*}
Put $\boldsymbol{1}:=(\underbrace{1,\dots,1}_{N'})^\mathsf{T}$ 
and $\boldsymbol{0}':=(1,\underbrace{0,\dots,0}_{N'-1})^\mathsf{T}$.
From Proposition \ref{prop:lowerMat}, the $k$-th row of $A_1^{-1} \boldsymbol{1}$ is written by
 \begin{align*}
  \frac{
  \left|
   \begin{array}{cccccccc}
    1 & a_{2,2} &0&0&\cdots&0 \\
    1 & a_{3,2} & a_{3,3} &0&\cdots&0 \\
    \vdots &\vdots &\vdots& \ddots & \ddots&\vdots \\  
    \vdots &\vdots &\vdots&& \ddots & 0 \\  
    1 & a_{k-1,2} & \cdots & \cdots & \cdots & a_{k-1,k-1}  \\
    1 & a_{k,2} & \cdots & \cdots & \cdots & a_{k,k-1}  \\
   \end{array}
  \right|}
  {a_{2,2}\cdots a_{k,k}}
  -
  \sum_{j=2}^{k} (-1)^{j}
  \frac{
  \left|
   \begin{array}{cccccccc}
    a_{j+1,j} & a_{j+1,j+1} &0&\cdots&0 \\
    \vdots &\vdots& \ddots & \ddots&\vdots \\  
    \vdots &\vdots&& \ddots & 0 \\  
    a_{k-1,j} & \cdots & \cdots & \cdots & a_{k-1,k-1}  \\
    a_{k,j} & \cdots & \cdots & \cdots & a_{k,k-1}  \\
   \end{array}
  \right|}
  {a_{j,j}\cdots a_{k,k}}.
 \end{align*}
 Then, by Lemma \ref{lem:simplefact}, we have $A_1^{-1} \boldsymbol{1}=\boldsymbol{0}'$. 
 Similarly, we have $A_2^{-1} \boldsymbol{1}=\boldsymbol{0}'$. 
Hence we find the result.  
\end{proof}

\section{Analytic properties of $\zeta (-n,s+n)$}
To show Theorem \ref{th:main1}, we determine the location of the poles and their residues of $\zeta (-n,s+n)$ in this section. 
Note that the meromorphic continuation and the possible singularities of $\zeta (s_1, s_2)$ with $s_1, s_2 \in {\mathbb{C}}$ are already given in Akiyama Egami and Tanigawa \cite{AET01}, Matsumoto \cite{Mat02} and Zhao \cite{Zha00}. 
For reader's convenience, we give some details of \cite[Section 4]{Mat02} which treats the analytic continuation of $\zeta (s_1, s_2)$.

According to \cite[(4.4)]{Mat02}, we have
\begin{align} \label{eq:AETM1}
\begin{split}
\zeta (s_1, s_2) = & \, \frac{\zeta (s_1+s_2-1)}{s_2-1} 
+ \sum_{k=0}^{M-1} \binom{-s_2}{k} \zeta (-k) \zeta (s_1+s_2+k)
\\ &+ \frac{1}{2 \pi i} \int_{(M-\varepsilon)} \frac{\Gamma (s_2+z) \Gamma (-z)}{\Gamma (s_2)} 
\zeta (s_1+s_2+z) \zeta (-z) dz,
\end{split}
\end{align} 
where $M$ is a positive integer, $\varepsilon$ is a positive number, and the path of integration is the vertical line from $M-\varepsilon-i\infty$ to $M-\varepsilon+i\infty$. 
Note that the integral above can be continued holomorphically to the region
 \begin{align*}
 \bigl\{ (s_1, s_2 ) \in {\mathbb{C}^2} \mid \Re (s_2) > -M + \varepsilon, \,\,\, \Re (s_1+s_2) > 1-M + \varepsilon \bigr\}
 \end{align*}
since in this region the poles of the integrand are not on the path of integration. Thus (\ref{eq:AETM1}) gives the meromorphic continuation of $\zeta (s_1, s_2)$ to whole ${\mathbb{C}}^2$ which is holomorphic in
\[
\bigl\{ (s_1, s_2) \in {\mathbb{C}}^2 \mid s_2 \ne 1, \,\,\, s_1 +s_2 \not \in \{ 2,1, 0, -2,-4,-6, \ldots \} \bigr\}
\]
because we can take $M$ arbitrarily and one has $\zeta (-2l)=0$, where $l$ is a positive integer (see also \cite[p.~109]{AET01}). 

By using the formula (\ref{eq:AETM1}), we show the following. 
\begin{thm} \label{th:pole}
 The function $\zeta (0,s)$ is analytic except for $s=2,1$ 
 and has a pole at $s=2$ whose residue is $1$ and a pole at $s=1$ whose residue is $2\zeta (0)$. 
 The function $\zeta (-1,s+1)$ is analytic except for $s=2,1$ 
 and has a pole at $s=2$ whose residue is $1/2$ and a pole at $s=1$ whose residue is $\zeta (0)$. 

 For an integer $n \ge2$, the function $\zeta (-n,s+n)$ is analytic 
 except for $s=2,1, 0, -2, -4$, $\dots,-2\lfloor n/2 \rfloor +2$ and has a poles
 \begin{align} \label{eq:pole1}
  \begin{array}{cccc}
   \mbox{at} & s=2, & \quad \mbox{ residue} & {\displaystyle{\frac{1}{n+1}}}, \\
   \mbox{at} & s=1, & \quad \mbox{ residue} & {\displaystyle{\zeta (0) = -\frac{1}{2}}},\\
   \mbox{at} & s=-2k, & \quad \mbox{ residue} & {\displaystyle{\binom{2k-n}{2k+1} \zeta (-2k-1)}},
  \end{array}
 \end{align} 
 where $k = 0, 1, 2,\dots, \lfloor n/2 \rfloor -1$.
\end{thm}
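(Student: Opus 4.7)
The plan is to specialize Matsumoto's formula \eqref{eq:AETM1} by setting $s_1 = -n$ and $s_2 = s + n$, which yields
\[
\zeta(-n, s+n) = \frac{\zeta(s-1)}{s+n-1} + \sum_{k=0}^{M-1} \binom{-s-n}{k} \zeta(-k)\, \zeta(s+k) + I_M(s),
\]
where $I_M(s)$ denotes the contour integral appearing in \eqref{eq:AETM1}. For any fixed half-plane $\{\Re(s) > -L\}$, choosing $M$ sufficiently large renders $I_M(s)$ holomorphic there, so all singularities of $\zeta(-n, s+n)$ in that region must arise from the first term and the finite sum.

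First I would enumerate the candidate poles and compute residues. The first term contributes a simple pole at $s = 2$ with residue $1/(n+1)$, and a potential pole at $s = 1 - n$ with residue $\zeta(-n)$. Each summand contributes a potential simple pole at $s = 1 - k$ (from $\zeta(s+k)$) with residue $\binom{k-1-n}{k}\zeta(-k)$. Because $\zeta(-2l) = 0$ for positive integers $l$, only $k = 0$ and odd values $k = 2j+1$ survive, giving a pole at $s = 1$ with residue $\zeta(0)$ and poles at $s = -2j$ with residue $\binom{2j-n}{2j+1}\zeta(-2j-1)$, which matches the claimed list once the cancellations below are carried out.

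Next I would verify that the residues at ``extra'' integer points vanish or cancel; this is the heart of the argument. For even $n \ge 2$, the first-term residue at $s = 1-n$ vanishes because $\zeta(-n) = 0$. For odd $n \ge 3$, the point $s = 1-n$ coincides with the pole of the $k = n$ summand (the case $j = (n-1)/2$), whose residue is $\binom{-1}{n}\zeta(-n) = -\zeta(-n)$ and therefore cancels the first-term contribution exactly. For $j \ge \lceil n/2 \rceil$, the binomial $\binom{2j-n}{2j+1}$ vanishes since its top entry is a non-negative integer strictly smaller than its bottom. The exceptional cases $n = 0, 1$ follow from the same computation: when $n = 0$ the first-term pole at $s = 1$ coincides with the $k = 0$ summand pole, so the residues add to $2\zeta(0)$; when $n = 1$ the would-be pole at $s = 0$ from the first term is killed by the $k = 1$ summand.

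The main obstacle I foresee is not analytic depth but parity bookkeeping: the cancellation mechanism at $s = 1-n$ toggles between ``first-term residue is already zero'' (even $n$) and ``first-term residue is annihilated by a summand'' (odd $n$), and one must track these two regimes together with the identity $\binom{-1}{n} = (-1)^n$ and the trivial zeros of $\zeta$. Once this is organized, the stated residue formulas drop out of \eqref{eq:AETM1} by direct evaluation.
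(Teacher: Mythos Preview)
Your proposal is correct and follows essentially the same route as the paper: specialize \eqref{eq:AETM1} at $(s_1,s_2)=(-n,s+n)$, read off the candidate poles from $\zeta(s-1)/(s+n-1)$ and the summands $\binom{-s-n}{k}\zeta(-k)\zeta(s+k)$, and then verify the same cancellations (trivial zero $\zeta(-n)=0$ for even $n$; for odd $n$ the first-term residue at $s=1-n$ is killed by the $k=n$ summand via $\binom{-1}{n}=(-1)^n$; large~$j$ killed by the vanishing binomial). The paper organizes this by treating $n=0,1$ first and then splitting into $n=2m$ and $n=2m+1$ with index $k=2l-1$, whereas you keep a single parity-aware discussion indexed by $j$; the content is identical.
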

\begin{proof}
Put $s_1 = -n$ and $s_2 = s+n$, where $n$ is a non-negative integer. Then one has
\begin{align} \label{eq:AETM2}
 \begin{split}
  \zeta (-n, s+n) 
  &=\frac{\zeta (s-1)}{s+n-1} + \sum_{k=0}^{M-1} \binom{-s-n}{k} \zeta (-k) \zeta (s+k) \\ 
  &\quad +\frac{1}{2 \pi i} \int_{(M-\varepsilon)} \frac{\Gamma (s+n+z) \Gamma (-z)}{\Gamma (s+n)} 
   \zeta (s+z) \zeta (-z) dz.
 \end{split}
\end{align} 
When $n=0$, the function $\zeta (0,s)$ has a pole at $s=2$ whose residue is $1$ and a pole at $s=1$ whose residue is $\zeta (0)$ by the function $\zeta (s-1)/(s-1)$, and a pole at $s=1$ whose residue is $\zeta (0)$ by $\binom{-s}{0} \zeta (0) \zeta (s)$. There is no pole in the sum
 \begin{align*}
  \sum_{k=1}^{M-1} \binom{-s}{k} \zeta (-k) \zeta (s+k)
 \end{align*}
since the zero of $\binom{-s}{k}$ and the pole of $\zeta (s+k)$ cancel each other when $k \ge 1$. 

Similarly, when $n=1$, the function $\zeta (-1,s+1)$ has a pole at $s=2$ whose residue is $1/2$ by the function $\zeta (s-1)/s$ and a pole at $s=1$ whose residue is $\zeta (0)$ by $\binom{-s-1}{0} \zeta (0) \zeta (s)$. At $s=0$, we have the pole whose residue is $\zeta(-1)$ in the function $\zeta (s-1)/s$ but this is canceled by the pole in the function $\binom{-s-1}{1} \zeta (-1) \zeta (s+1)$. There is no pole in the sum
 \begin{align*}
 \sum_{k=2}^{M-1} \binom{-s-1}{k} \zeta (-k) \zeta (s+k)
 \end{align*}
since the zero of $\binom{-s-1}{k}$ and the pole of $\zeta (s+k)$ cancel each other when $k \ge2$. 

Assume that $m$ is a positive integer and consider the poles of $\zeta (-2m,s+2m)$. 
The function 
\[
 \frac{\zeta (s-1)}{s+2m-1}
\]
has only one pole at $s=2$ whose residue is $(2m+1)^{-1}$. 
Note that the pole of $(s+2m-1)^{-1}$ at $s= -2m+1$ is canceled by $\zeta (s-1)$ with $s-1=-2m$ by the fact that $\zeta (-2m)=0$. 
Next we consider the function
\[
 \sum_{k=0}^{M-1} f_k (s) \zeta (s+k), \qquad f_k (s) := \binom{-s-2m}{k} \zeta (-k).
\]
The function $f_0(s) \zeta (s)$ has a pole at $s=1$ whose residue is $\zeta (0)$. 
Let $l$ be a positive integer. 
Then the function
\[
 f_{2l-1} (s) \zeta (s+2l-1)
\]
has a pole at $s+2l-1=1$ whose residue is $f_{2l-1} (2-2l)$ if $l \le m$. 
When $l > m$, the function $f_{2l-1} (s) \zeta (s+2l-1)$ does not have a pole since the function $f_{2l-1} (s)$ has a zero at $s=2-2l$ by
\[
 f_{2l-1} (s)
 =\frac{(-s-2m) (-s-2m-1) \cdots (-s -2l+2) \cdots (-s -2m-2l-2)}{(2l-1)!} \zeta (1-2l).
\] 
On the other hand, the function
\[
 f_{2l} (s) \zeta (s+2l)
\]
is holomorphic since $\zeta (-2l)=0$ if $l$ is a positive integer. 

Similarly, we consider the poles of $\zeta (-2m-1,s+2m+1)$. 
The function 
\[
 \frac{\zeta (s-1)}{s+2m}
\]
has a pole at $s=2$ whose residue is $(2m+2)^{-1}$ and a pole at $s=-2m$ whose residue is $\zeta (-2m-1)$. Now we pick up the pole of the function
\[
\sum_{k=0}^{M-1} g_k (s) \zeta (s+k), \qquad g_k(s) := \binom{-s-2m-1}{k} \zeta (-k).
\]
The function $g_0(s) \zeta (s)$ has a pole at $s=1$ whose residue is $\zeta (0)$. When $l \le m$, the function 
\[
g_{2l-1} (s) \zeta (s+2l-1)
\]
has a pole at $s=2-2l$ whose residue is $g_{2l-1} (2-2l)$. When $l=m+1$ the function $g_{2m+1} (s) \zeta (s+2m+1)$ has a simple pole at $s=-2m$ whose residue is
\[
g_{2m+1} (2m) = \binom{2m - 2m-1}{2m+1} \zeta (-2m-1) = -\zeta (-2m-1).
\]
It should be emphasised that the pole above is canceled by the pole at $s=-2m$ which comes from the function $(s+2m)^{-1} \zeta (s-1)$. If $l > m+1$ the function $g_{2l-1} (s) \zeta (s+2l-1)$ does not have a pole since the function $f_{2l-1} (s)$ has a zero at $s=2-2l$. 
In addition, the function
\[
g_{2l} (s) \zeta (s+2l)
\]
is holomorphic since $\zeta (-2l)=0$ if $l$ is a positive integer. Therefore, we have the result.
\end{proof}

\section{Proofs of the last part of Theorem \ref{th:main1}{} and Theorem \ref{th:main2}}
In order to show Theorems \ref{th:main1} and \ref{th:main2}, we prove some corollaries below deduced form Theorem \ref{th:pole}. 
\begin{cor} \label{cor:2m}
 Let $m$ be a positive integer. 
 Then there do not exist complex numbers $c_0,c_1 , c_2$, $\dots, c_{2m-1}$ such that
 \begin{align*}
  0 &\equiv \zeta (-2m,s+2m) + c_{2m-1} \zeta (-2m+1,s+2m-1) + c_{2m-2} \zeta (-2m+2,s+2m-2) \\
  &\quad +\cdots + c_{2} \zeta (-2,s+2) + c_{1} \zeta (-1,s+1) + c_{0} \zeta (0,s)/2.
 \end{align*}
\end{cor}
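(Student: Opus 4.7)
The plan is to exploit the pole structure of $\zeta(-n, s+n)$ established in Theorem \ref{th:pole} and to distinguish $\zeta(-2m, s+2m)$ from the lower-index summands by the location of its leftmost pole on the real axis. Specifically, Theorem \ref{th:pole} shows that $\zeta(-2m, s+2m)$ has a pole at $s = -2m+2$ (the $k = m-1$ case of \eqref{eq:pole1}), whereas for every $n$ with $0 \le n \le 2m-1$ we have $\lfloor n/2 \rfloor \le m-1$, so the leftmost real pole of $\zeta(-n, s+n)$ lies at $s = -2\lfloor n/2 \rfloor + 2 \ge -2m+4$. Thus none of $\zeta(0,s), \zeta(-1,s+1), \ldots, \zeta(-2m+1, s+2m-1)$ has a pole at $s = -2m+2$.

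Assuming a linear relation of the asserted form held as an identity of meromorphic functions, I would take the residue of both sides at $s = -2m+2$. The left-hand side gives $0$, and on the right-hand side every term other than $\zeta(-2m, s+2m)$ is holomorphic at $s = -2m+2$, so by \eqref{eq:pole1} with $n = 2m$ and $k = m-1$ the right-hand side equals
\[
\binom{-2}{2m-1}\,\zeta(-2m+1).
\]
A direct computation gives $\binom{-2}{2m-1} = -2m \ne 0$, and $\zeta(-2m+1)$ is nonzero since $\zeta$ does not vanish at negative odd integers (it equals $-B_{2m}/(2m)$, a nonzero rational multiple of a Bernoulli number). This contradiction rules out the existence of such $c_0, c_1, \ldots, c_{2m-1}$.

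The whole argument reduces to one residue computation, so no step presents any serious obstacle. The only bookkeeping needed is to verify the inequality $-2\lfloor n/2 \rfloor + 2 > -2m+2$ for $0 \le n \le 2m-1$, and to confirm that neither factor in the residue formula vanishes.
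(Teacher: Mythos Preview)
Your proof is correct and takes essentially the same approach as the paper: both arguments observe via Theorem~\ref{th:pole} that $\zeta(-2m,s+2m)$ has a pole at $s=2-2m$ while none of $\zeta(-n,s+n)$ for $0\le n\le 2m-1$ does, yielding the contradiction. You supply the extra detail of explicitly computing the residue $\binom{-2}{2m-1}\zeta(-2m+1)=-2m\,\zeta(-2m+1)\ne 0$, which the paper leaves implicit in its assertion that $s=2-2m$ is genuinely a pole.
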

\begin{proof}
 From Theorem \ref{th:pole}, the function $\zeta (-2m,s+2m)$ has a pole at $s=2-2m$. 
 On the contrary, the other functions $\zeta (-2m+1,s+2m-1)$, $\ldots$, $\zeta (0,s)$ do not have a pole there. 
\end{proof}

\begin{cor} \label{cor:2m+1cofzero}
 Let $m$ be a positive integer and suppose that there exist complex numbers $c_0, c_1 ,\dots, c_{2m+1}$ such that
 \begin{align} \label{eq:uniorno1}
  \begin{split}
   0 &\equiv c_{2m+1} \zeta (-2m-1,s+2m+1) + \cdots + c_{1} \zeta (-1,s+1) + c_{0} \zeta (0,s)/2 .
  \end{split}
 \end{align} 
 Then we have $c_0 + c_1 + c_2 + \cdots + c_{2m+1} =0$.
\end{cor}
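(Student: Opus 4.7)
The plan is to extract the asserted identity $c_0+c_1+\cdots+c_{2m+1}=0$ by taking the residue of the putative relation \eqref{eq:uniorno1} at the pole $s=1$. The crucial observation is that Theorem \ref{th:pole} endows every $\zeta(-n,s+n)$ with $n\ge 0$ with a simple pole at $s=1$, and these residues are uniform in $n$: for $n\ge 1$ each residue equals $\zeta(0)=-1/2$, while $\zeta(0,s)$ has residue $2\zeta(0)=-1$, so the prefactored term $\zeta(0,s)/2$ that appears in \eqref{eq:uniorno1} likewise contributes $-1/2$.

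Taking $\operatorname{Res}_{s=1}$ on both sides of \eqref{eq:uniorno1}, the left-hand side vanishes and the right-hand side collapses, by the uniformity above, to $-\tfrac{1}{2}(c_0+c_1+\cdots+c_{2m+1})$. Equating the two sides yields the corollary immediately.

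The argument is essentially a one-line residue computation once Theorem \ref{th:pole} is at our disposal, so no genuine obstacle arises. What deserves emphasis is the choice of pole: taking residues at $s=2$ instead would produce residues $1/(n+1)$ that vary with $n$, yielding a weighted identity $\sum c_n/(n+1)=0$ rather than the clean unweighted sum sought here. It is precisely the uniformity of the residues at $s=1$ that drives the proof, and it is this uniformity (together with the parity-compatible absence of further poles between $s=2$ and $s=0$ for odd-indexed $\zeta(-n,s+n)$) that makes $s=1$ the natural pole to exploit.
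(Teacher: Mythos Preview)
Your argument is correct and is essentially identical to the paper's own proof: both take the residue at $s=1$ using Theorem~\ref{th:pole}, note that each of $\zeta(0,s)/2$ and $\zeta(-l,s+l)$ for $1\le l\le 2m+1$ contributes the common residue $\zeta(0)=-1/2$, and conclude that the vanishing of the left-hand side forces $c_0+c_1+\cdots+c_{2m+1}=0$. Your additional commentary on why $s=1$ is the natural choice (uniform residues, as opposed to the weighted sum one would get at $s=2$) is a helpful observation but not needed for the proof itself.
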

\begin{proof}
 The functions $\zeta (0,s)/2$ and $\zeta (-l,s+l) \; (1 \le l \le 2m+1)$ have a pole at $s=1$ whose residue is $\zeta (0)$ by Theorem \ref{th:pole}. 
 Thus we have $c_0 + c_1 + \cdots + c_{2m+1} =0$ since the constant function $0$ is analytic. 
\end{proof}

\begin{cor} \label{lem:2m+1}
 Let $m$ be a non-negative integer and suppose that
 \begin{align} \label{eq:uniorno1}
  \begin{split}
   0 &\equiv \zeta (-2m-1,s+2m+1) + c_{2m} \zeta (-2m,s+2m) \\
   &\quad +c_{2m-2} \zeta (-2m+2,s+2m-2) + \cdots + c_{2} \zeta (-2,s+2)  + c_{0} \zeta (0,s)/2 .
  \end{split}
 \end{align} 
 Then the coefficients $c_0, c_2, c_4, \ldots , c_{2m}$ are uniquely determined. 
\end{cor}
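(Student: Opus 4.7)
The plan is to exploit the precise pole structure from Theorem \ref{th:pole} to extract each coefficient $c_{2i}$ from the residue of \eqref{eq:uniorno1} at a specific point. The residues at $s=-2k$ distinguish the unknowns in a triangular way: by Theorem \ref{th:pole}, $\zeta(-2l,s+2l)$ has a pole at $s=-2k$ precisely when $l\ge k+1$, while $\zeta(-2m-1,s+2m+1)$ has a pole there for every $k=0,\ldots,m-1$ and $\zeta(0,s)/2$ is analytic at $s=-2k$. So comparing residues at these points isolates exactly the ``higher'' coefficients $c_{2l}$ with $l \ge k+1$.

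First I would take the residue of \eqref{eq:uniorno1} at $s=-2k$ for each $0\le k\le m-1$. Dividing by the nonzero factor $\zeta(-2k-1)$, the residue equation becomes
\begin{align*}
\binom{2k-2m-1}{2k+1}+\sum_{l=k+1}^{m}c_{2l}\binom{2k-2l}{2k+1}=0.
\end{align*}
At $k=m-1$ only $c_{2m}$ appears, with diagonal coefficient $\binom{-2}{2m-1}=-2m\neq 0$, so $c_{2m}$ is uniquely determined. Decreasing $k$ by $1$ introduces exactly one new unknown, namely the next lower $c_{2l}$, again paired with a nonvanishing diagonal coefficient $\binom{-2}{2k+1}=-(2k+2)$. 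This triangular recursion determines $c_{2m},c_{2m-2},\ldots,c_{2}$ in turn.

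To pin down the remaining coefficient $c_0$, I would then compare residues at $s=1$. By Theorem \ref{th:pole}, each $\zeta(-l,s+l)$ with $l\ge 1$ has residue $\zeta(0)$ there, as does $\zeta(0,s)/2$, so \eqref{eq:uniorno1} forces
\begin{align*}
\zeta(0)\bigl(1+c_{2m}+c_{2m-2}+\cdots+c_2+c_0\bigr)=0,
\end{align*}
and $\zeta(0)=-1/2\neq 0$ determines $c_0$ from the already-known values.

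The whole argument is analytic bookkeeping; the only nontrivial ingredient is Theorem \ref{th:pole}. The main point to check is that at each $s=-2k$ the set of contributing terms is exactly $\{\zeta(-2m-1,s+2m+1)\}\cup\{\zeta(-2l,s+2l):l\ge k+1\}$, so that the system really is triangular. This is read off directly from \eqref{eq:pole1}, and together with the nonvanishing of $\zeta(-2k-1)$ and of $\binom{-2}{2k+1}$, no further input is needed.
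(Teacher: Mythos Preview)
Your argument is correct and follows essentially the same route as the paper: the paper also extracts $c_{2m},c_{2m-2},\ldots,c_2$ by comparing residues at the points $s=-2k$ (written there, with a sign slip, as $s=2m-2,\,2m-4,\ldots$) using the triangular structure from Theorem \ref{th:pole}, and then fixes $c_0$ via the residue at $s=1$, i.e.\ Corollary \ref{cor:2m+1cofzero}. Your write-up makes the triangular residue system and the nonvanishing diagonal coefficients $\binom{-2}{2k+1}$ more explicit, but the proof is the same.
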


\begin{proof}
 By the poles of $\zeta (-2m-1,s+2m+1)$ and $\zeta (-2m,s+2m)$ at $s= 2m-2$ 
 and their residue shown in Theorem \ref{th:pole}, the  coefficient $c_{2m}$ which satisfies
 \[
  \binom{-3}{2m-1} + c_{2m} \binom{-2}{2m-1} =0
 \] 
 is determined uniquely, namely, one has
 \begin{align} \label{eq:c}
  c_{2m} = - \frac{2m+1}{2}.
 \end{align}
 From the poles of $\zeta (-2m-1,s+2m+1)$, $\zeta (-2m,s+2m)$, $\zeta (-2m+1,s+2m-1)$, 
 and $\zeta (-2m+2,s+2m-2)$ at $s= 2m-4$ and their residue, we have
 \[
  \binom{-5}{2m-3} + c_{2m} \binom{-4}{2m-3} + 0 \binom{-3}{2m-3} + c_{2m-2} \binom{-2}{2m-3} = 0. 
 \] 
 Hence, the coefficient $c_{2m-2}$ is determined uniquely since one has $c_{2m} = - (2m+1)/2$. 
 Similarly it holds that 
 \begin{align*} 
  \begin{split}
   &\binom{-7}{2m-5} + c_{2m} \binom{-6}{2m-5} + 0 \binom{-5}{2m-5} + c_{2m-2} \binom{-4}{2m-5} \\  
   & +0 \binom{-3}{2m-5} + c_{2m-4} \binom{-2}{2m-5} =0.
  \end{split}
 \end{align*} 
 Thus we can uniquely determine $c_{2m-4}$, $c_{2m-6}, \ldots , c_2$, inductively. 
 The coefficient $c_0$ is determined uniquely by Corollary \ref{cor:2m+1cofzero}. 
\end{proof}

Now we prove the remaining parts of our main theorems given in Section 2.
\begin{proof}[Proof of the last part of Theorem \ref{th:main1}]
 From Corollary \ref{cor:2m}, 
 it suffices to show that all equations of the form \eqref{eq:uniorno1} can be derived from \eqref{eq:eq1}, 
 which is readily apparent from Corollary \ref{lem:2m+1} and \eqref{eq:eq1}. 

 In fact, from the equation \eqref{eq:eq1}, we have
 \begin{align} \label{eq:d}
 \begin{split}
  &(\zeta(-1,s+1),\zeta(-3,s+3),\dots, \zeta(-2N'+1,s+2N'-1) )^\mathsf{T} \\
  &=A_2 A_1^{-1} (\zeta(0,s)/2,\zeta(-2,s+2),\zeta(-4,s+4),\dots, \zeta(-2N'+2,s+2N'-2) )^\mathsf{T}, 
 \end{split}
 \end{align}
 where $N'$ is defined in Section 1. 
 Since $A_2 A_1^{-1}$ is a lower triangular matrix, we have 
 \begin{equation} \label{eq:mrela1}
 \begin{split}
  \zeta (-2m-1,s+2m+1) 
  &\equiv 
  c_{2m} \zeta (-2m,s+2m) +c_{2m-2} \zeta (-2m+2,s+2m-2)  \\
  &\quad + \cdots + c_{2} \zeta (-2,s+2)  + c_{0} \zeta (0,s)/2
 \end{split}
 \end{equation} 
 for $0 \le m\le N'$,
 where $c_0, \ldots , c_{2m}$ are rational numbers. 
 From Corollary \ref{lem:2m+1}, there are no other ${\mathbb{Q}}$-linear relations 
 among $\zeta (-2m-1,s+2m+1)$ and $\zeta (-2m,s+2m), \zeta (-2m+2,s+2m-2), \ldots, \zeta (0,s)$. 
 Therefore, the equation \eqref{eq:eq1} gives all $\mathbb{Q}$-linear relations of $\mathcal{Z}_{N}$.
\end{proof}

\begin{proof}[Proof of Theorem \ref{th:main2}]
 When $N=1$, by Lemma \ref{lem:main1}, 
 we easily see that $\mathcal{Z}_{1}=\mathbb{Q} \cdot \zeta(-1,s+1) =\mathbb{Q} \cdot \zeta(0,s)$. 
 If $N=2$, we have $\mathcal{Z}_{2}= \bigoplus_{c=0,1} \mathbb{Q} \cdot \zeta(2c,s)$ by Corollary \ref{cor:2m}. 
 For $N=3$, one has $\mathcal{Z}_{3}= \mathcal{Z}_{2}$ by the $\mathbb{Q}$-linear relation \eqref{eq:eq1} or \eqref{eq:mrela1}. 
 Similarly, we have $\mathcal{Z}_{4}= \bigoplus_{c=0}^{2} \mathbb{Q} \cdot \zeta(2c,s)$ and $\mathcal{Z}_{5}= \mathcal{Z}_{4}$ 
 from Corollary \ref{cor:2m} and Theorem \ref{th:main1} or (\ref{eq:mrela1}), respectively.  
 Hence, we obtain Theorem \ref{th:main2} by Corollary \ref{cor:2m} and Theorem \ref{th:main1}, (\ref{eq:eq1}) or (\ref{eq:mrela1}), inductively. 
\end{proof}

\section{Examples}
This section describes concrete examples when $N=12$ in Section 2.
\begin{ex}[Matrices] \label{acd}
 The specific values of the matrices $A,A_1,A_2,A_1^{-1},A_2^{-1}$ are shown below.
\begin{align*}
A=
\left(
\begin{array}{cccccc}
 1 & 0 & 0 & 0 & 0 & 0 \\
 1 & 0 & 0 & 0 & 0 & 0 \\
 1 & -2 & 0 & 0 & 0 & 0 \\
 1 & -3 & 0 & 0 & 0 & 0 \\
 1 & -4 & 2 & 0 & 0 & 0 \\
 1 & -5 & 5 & 0 & 0 & 0 \\
 1 & -6 & 9 & -2 & 0 & 0 \\
 1 & -7 & 14 & -7 & 0 & 0 \\
 1 & -8 & 20 & -16 & 2 & 0 \\
 1 & -9 & 27 & -30 & 9 & 0 \\
 1 & -10 & 35 & -50 & 25 & -2 \\
 1 & -11 & 44 & -77 & 55 & -11 \\
\end{array}
\right),
\end{align*} 
\begin{align*}
A_1=
\left(
\begin{array}{cccccc}
 1 & 0 & 0 & 0 & 0 & 0 \\
 1 & -2 & 0 & 0 & 0 & 0 \\
 1 & -4 & 2 & 0 & 0 & 0 \\
 1 & -6 & 9 & -2 & 0 & 0 \\
 1 & -8 & 20 & -16 & 2 & 0 \\
 1 & -10 & 35 & -50 & 25 & -2 \\
\end{array}
\right),
\; A_2=
\left(
\begin{array}{cccccc}
 1 & 0 & 0 & 0 & 0 & 0 \\
 1 & -3 & 0 & 0 & 0 & 0 \\
 1 & -5 & 5 & 0 & 0 & 0 \\
 1 & -7 & 14 & -7 & 0 & 0 \\
 1 & -9 & 27 & -30 & 9 & 0 \\
 1 & -11 & 44 & -77 & 55 & -11 \\
\end{array}
\right).
\end{align*}

By using Proposition \ref{prop:lowerMat}, we can calculate 
\begin{align*}
A_1^{-1}&=
\left(
\begin{array}{cccccc}
 1 & 0 & 0 & 0 & 0 & 0 \\
 1/2 & -1/2 & 0 & 0 & 0 & 0 \\
 1/2 & -1 & 1/2 & 0 & 0 & 0 \\
 5/4 & -3 & 9/4 & -1/2 & 0 & 0 \\
 13/2 & -16 & 13 & -4 & 1/2 & 0 \\
 227/4 & -140 & 115 & -75/2 & 25/4 & -1/2 \\
\end{array}
\right),\\
\; A_2^{-1}&=
\left(
\begin{array}{cccccc}
 1 & 0 & 0 & 0 & 0 & 0 \\
 1/3 & -1/3 & 0 & 0 & 0 & 0 \\
 2/15 & -1/3 & 1/5 & 0 & 0 & 0 \\
 8/105 & -1/3 & 2/5 & -1/7 & 0 & 0 \\
 8/105 & -4/9 & 11/15 & -10/21 & 1/9 & 0 \\
 32/231 & -8/9 & 5/3 & -29/21 & 5/9 & -1/11 \\
\end{array}
\right).
\end{align*}
Note that the sum of the components in each row of matrices $A_1^{-1}$ and $A_2^{-1}$ is 0 except for the first row (see Corollary \ref{cor:sumis0}). 
\end{ex}

\begin{ex}[Relations between the Euler-Zagier and Tornheim double zeta functions] \label{ex:zt}
 By Proposition \ref{prop:Tornheim}, we have
 \begin{align*}
 \left(
 \begin{array}{c}
 \zeta(0,s)/2 \\
 \zeta(-1,s+1) \\
 \zeta(-2,s+2) \\
 \zeta(-3,s+3) \\
 \cdots \\
 \zeta(-10,s+10) \\
 \zeta(-11,s+11) \\
 \end{array} 
 \right) 
 =\frac{A}{2}\;
 \left(
 \begin{array}{c}
 T(0,0;s) \\
 T(0,0;s) \\
 T(-1,-1;s+2) \\
 T(-1,-1;s+2) \\
 \cdots \\
 T(-5,-5;s+10) \\
 T(-5,-5;s+10) \\
 \end{array}
 \right),
 \end{align*}
 i.e.,
 \begin{align*}
 \left(
 \begin{array}{c}
 \zeta(0,s)/2 \\
 \zeta(-2,s+2) \\
 \zeta(-4,s+4) \\
 \zeta(-6,s+6) \\
 \zeta(-8,s+8) \\
 \zeta(-10,s+10) \\
 \end{array} 
 \right) 
 =\frac{A_1}{2}\;
 \left(
 \begin{array}{c}
 T(0,0;s) \\
 T(-1,-1;s+2) \\
 T(-2,-2;s+4) \\
 T(-3,-3;s+6) \\
 T(-4,-4;s+8) \\
 T(-5,-5;s+10) \\
 \end{array}
 \right),
 \end{align*}
 \begin{align*}
 \left(
 \begin{array}{c}
 \zeta(-1,s+1) \\
 \zeta(-3,s+3) \\
 \zeta(-5,s+5) \\
 \zeta(-7,s+6) \\
 \zeta(-9,s+9) \\
 \zeta(-11,s+11) \\
 \end{array} 
 \right) 
 =\frac{A_2}{2}\;
 \left(
 \begin{array}{c}
 T(0,0;s) \\
 T(-1,-1;s+2) \\
 T(-2,-2;s+4) \\
 T(-3,-3;s+6) \\
 T(-4,-4;s+8) \\
 T(-5,-5;s+10) \\
 \end{array}
 \right).
 \end{align*}
 From the equations above, we have
 \begin{align}  \label{eq:relmat}
 \left(
 \begin{array}{c}
 T(0,0;s) \\
 T(-1,-1;s+2) \\
 T(-2,-2;s+4) \\
 T(-3,-3;s+6) \\
 T(-4,-4;s+8) \\
 T(-5,-5;s+10) \\
 \end{array}
 \right)
 =2A_1^{-1} \;
  \left(
 \begin{array}{c}
 \zeta(0,s)/2 \\
 \zeta(-2,s+2) \\
 \zeta(-4,s+4) \\
 \zeta(-6,s+6) \\
 \zeta(-8,s+8) \\
 \zeta(-10,s+10) \\
 \end{array} 
 \right) 
 =2A_2^{-1} \;
  \left(
 \begin{array}{c}
 \zeta(-1,s+1) \\
 \zeta(-3,s+3) \\
 \zeta(-5,s+5) \\
 \zeta(-7,s+7) \\
 \zeta(-9,s+9) \\
 \zeta(-11,s+11) \\
 \end{array} 
 \right).  
 \end{align}
\end{ex}

\begin{ex}[Basis representation] 
 By using \eqref{eq:relmat} or \eqref{eq:d} with $N=12$, we have 
 \begin{align*}
  \zeta (-1,s+1) &= \zeta (0,s)/2 \\
  \zeta(-3, s + 3) 
  &= 3 \zeta(-2, s + 2)/2 -\zeta(0, s )/4, \\
  \zeta(-5, s + 5) 
  &= 5 \zeta(-4, s + 4)/2 -5 \zeta(-2, s + 2)/2 + \zeta(0, s )/2, \\
  \zeta(-7, s + 7) 
  &= 7 \zeta(-6, s + 6)/2 -35 \zeta(-4, s + 4)/4 + 21 \zeta(-2, s + 2)/2 - 17 \zeta(0, s )/8, \\
  \zeta(-9, s + 9) 
  &= 9 \zeta(-8, s + 8)/2 - 21 \zeta(-6, s + 6) +63 \zeta(-4, s + 4) - 153 \zeta(-2, s + 2)/2 \\
  &\quad + 31/2 \zeta(0, s ), \\ 
  \zeta(-11, s + 11) 
  &= 11 \zeta(-10, s + 10)/2 - 165 \zeta(-8, s + 8)/4 +231 \zeta(-6, s + 6) \\
  &\quad - 2805 \zeta(-4, s + 4)/4 +1705 \zeta(-2, s + 2)/2 - 691 \zeta(0, s )/4. 
 \end{align*}
 Similar to the above, we can also obtain
 \begin{align*}
  \zeta(-13, s + 13) 
  &= 13 \zeta(-12, s + 12)/2 - 143 \zeta(-10, s + 10)/2 +1287 \zeta(-8, s + 8)/2 \\
  &\quad - 7293 \zeta(-6, s + 6)/2 + 22165 \zeta(-4, s + 4)/2 - 26949 \zeta(-2, s + 2)/2 \\
  &\quad + 5461 \zeta(0, s )/2, \\
  \zeta(-15, 15 + s) 
  &= 15 \zeta(-14, 14 + s)/2 - 455 \zeta(-12, s + 12)/4 + 3003 \zeta(-10, s + 10)/2 \\
  &\quad - 109395 \zeta(-8, s + 8)/8 + 155155 \zeta(-6, s + 6)/2 - 943215 \zeta(-4, s + 4)/4 \\
  &\quad + 573405 \zeta(-2, s + 2)/2 - 929569 \zeta(0, s )/16.
 \end{align*} 
\end{ex}
\begin{rem}
 By the method given in the proof of Corollary \ref{lem:2m+1}, we can determine the coefficients of $\mathbb{Q}$-linear relations above (see \eqref{eq:c}).
 In the sixth and eighth relations, we can find prime numbers $691$ and $3617$ derived from $929569 = 257 \times 3617$. 
 These are caused by $\zeta (-11)$ and $\zeta (-15)$, respectively (see Theorem \ref{th:pole} and Corollary \ref{cor:2m+1cofzero}). 
\end{rem}



\begin{thebibliography}{99}
\bibitem{AET01}
S. Akiyama, S. Egami, and Y. Tanigawa, 
\textit{Analytic continuation of multiple zeta-functions and their values at non-positive integers},
Acta Arith.\ \textbf{98} (2001), 107--116.

\bibitem{Bro12}
F. C. S. Brown, 
\textit{Mixed Tate motives over $\mathbb{Z}$}, 
Annals of Math.\ \textbf{175} (2012), 949--976.

\bibitem{DG05}
P. Deligne and A. Goncharov,
\textit{Groupes fondamentaux motiviques de Tate mixte}. 
Ann.\ Sci.\ \'Ec.\ Norm.\ Sup\'er.\ (4) \textbf{38} (2005), 1--56. 

\bibitem{DB14}
S. Dutta and P. Baliarsingh, 
\textit{On some Toeplitz matrices and their inversions},
J. Egypt.\ Math.\ Soc.\ \textbf{} (2014), 420--423.

\bibitem{Fur11}
H. Furusho, 
\textit{Double shuffle relation for associators}, 
Annals of Math.\ \textbf{174} (2011), 341--360.

\bibitem{HMO18}
M. Hirose, H. Murahara, and T. Onozuka, 
\textit{Sum formula for multiple zeta function}, 
arXiv:1808.01559.

\bibitem{HMO19}
M. Hirose, H. Murahara, and T. Onozuka, 
\textit{An interpolation of Ohno's relation to complex functions}, 
Math.\ Scand.\ \textbf{126} (2020), 293--297. 

\bibitem{HS19}
M. Hirose and N. Sato,
\textit{Iterated integrals on \(\mathbb{P}^1 \setminus \{0, 1, \infty, z \}\) and a class of relations among multiple zeta values}, 
Adv.\ Math.\ \textbf{348} (2019) 163--182.

\bibitem{Hof97}
M. E. Hoffman, 
\textit{The algebra of multiple harmonic series}, 
J. Algebra \textbf{194} (1997), 477--495.

\bibitem{IKZ06}
K. Ihara, M. Kaneko and D. Zagier,
\textit{Derivation and double shuffle relations for multiple zeta values},
Compositio Math.\ \textbf{142} (2006), 307--338.

\bibitem{Kaw09}
G. Kawashima,
\textit{A class of relations among multiple zeta values},
J. Number Theory \textbf{129} (2009), 755--788.

\bibitem{LM95}
T. Q. T. Le and J. Murakami, 
\textit{Kontsevich's integral for the Homfly polynomial and relations between values of the multiple zeta functions}, 
Topology Appl.\ \textbf{62} (1995), 193--206.

\bibitem{LM96}
T. Q. T. Le and J. Murakami, 
\textit{Kontsevich's integral for the Kauffman polynomial}, 
Nagoya Math. J. \textbf{142} (1996), 39--65.

\bibitem{Mat02}
K. Matsumoto, 
\textit{On the analytic continuation of various multiple zeta-functions}, 
 in Number Theory for the Millennium (Urbana, 2000), Vol. II, M. A. Bennett et.\ al.\ (eds.), A. K. Peters, Natick, MA, (2002), 417--440.

\bibitem{MO20}
H. Murahara and T. Onozuka, 
\textit{Cyclic relation for multiple zeta function}, 
arXiv:2003.08068. 

\bibitem{Nak06}
T. Nakamura, 
\textit{A functional relations for the Tornheim double zeta function}, 
Acta Arith.\ \textbf{125} (2006), 257--263. 

\bibitem{Ter02}
T. Terasoma, 
\textit{Mixed Tate motives and multiple zeta values}, 
Invent.\ Math.\ \textbf{149} (2002), 339--369. 

\bibitem{Zha00}
J. Zhao, 
\textit{Analytic continuation of multiple zeta functions},
Proc.\ Amer.\ Math.\ Soc.\ \textbf{128} (2000), 1275--1283.

\bibitem{Zag94}
D. Zagier, 
\textit{Values of zeta functions and their applications}, 
in First European Congress of Mathematics (Paris, 1992), Vol.\ II, A. Joseph et.\ al.\ (eds.), Birkh\"{a}user, Basel, 1994, 
497--512.
\end{thebibliography}
\end{document}